\documentclass[preprint,12pt]{elsarticle}

\usepackage[utf8]{inputenc}
\usepackage[T1]{fontenc}
\usepackage{amsmath,amssymb,amsthm,mathtools}
\usepackage{enumitem}
\usepackage{hyperref}

\hypersetup{hidelinks}

\newtheorem{theorem}{Theorem}[section]
\newtheorem{lemma}[theorem]{Lemma}
\newtheorem{proposition}[theorem]{Proposition}

\newtheorem{question}[theorem]{Question}
\theoremstyle{definition}
\newtheorem{definition}[theorem]{Definition}
\newtheorem{remark}[theorem]{Remark}

\newcommand{\R}{\mathbb{R}}
\newcommand{\Z}{\mathbb{Z}}

\newcommand{\STP}{\mathrm{STP}}

\newcommand{\TN}{\mathrm{TN}}

\makeatletter
\def\ps@pprintTitle{%
  \let\@oddhead\@empty
  \let\@evenhead\@empty
  \let\@oddfoot\@empty
  \let\@evenfoot\@empty}
\makeatother

\begin{document}

\begin{frontmatter}

\title{Strict Total Positivity from Spectral Darboux and Toeplitz Smoothing Mechanisms}

\author[addr1]{Domingos S. P. Salazar\corref{cor1}}
\address[addr1]{Unidade de Educa\c{c}\~ao a Dist\^ancia e Tecnologia,
Universidade Federal Rural de Pernambuco,
52171-900 Recife, Pernambuco, Brazil}
\cortext[cor1]{Corresponding author.}

\begin{abstract}
We prove two strict total-positivity results by isolating two strictification mechanisms.  The first is a spectral Darboux mechanism: an induction converts positivity and ordered endpoint asymptotics for a one-dimensional spectral family into positive Wronskians and hence into strict total positivity.  As an application, the modified-Bessel kernel
\[
        K(x,s)=I_s(x),\qquad x>0,\quad s\ge 0,
\]
is strictly totally positive of infinite order.  This proves the real-order determinant positivity asked for by Buchstaber and Glutsyuk after their nonnegative-integer-order theorem \cite[Theorem~1.4 and Open Question after Remark~1.8]{BuchstaberGlutsyuk2019}.  The second mechanism is discrete Toeplitz smoothing: every two-sided Pólya-frequency sequence is a pointwise limit of totally positive Pólya-frequency sequences.  This gives a product-topology answer to Question~12.2 of Belton, Guillot, Khare, and Putinar \cite[Question~12.2]{BeltonGuillotKharePutinar2023}.  The density statement is in the product topology on $\R^\Z$; no uniform, weighted, or norm-density assertion is made.
\end{abstract}

\begin{keyword}
total positivity \sep modified Bessel functions \sep P\'olya-frequency sequences \sep Darboux transformation \sep Toeplitz kernels
\MSC[2020] 15B48 \sep 33C10 \sep 34B24 \sep 30B10
\end{keyword}

\end{frontmatter}

\section{Introduction}

Total positivity is a structural positivity theory for determinants; standard references include Karlin \cite{Karlin1968}, Pinkus \cite{Pinkus2010}, and Fallat--Johnson \cite{FallatJohnson2011}.  This paper focuses on strict total positivity of kernels.  A kernel $K:X\times Y\to\R$ on totally ordered sets is strictly totally positive of infinite order, written $\STP_\infty$, if
\[
        \det[K(x_i,y_j)]_{i,j=1}^m>0
\]
for every $m\ge1$ and every $x_1<\cdots<x_m$ in $X$ and $y_1<\cdots<y_m$ in $Y$.

The results below are motivated by two concrete questions in the recent total-positivity literature.  Buchstaber and Glutsyuk proved that the modified-Bessel kernel $(x,j)\mapsto I_j(x)$ is strictly totally positive for positive arguments and nonnegative integer indices, and then asked for the corresponding determinant positivity with non-integer indices \cite[Theorem~1.4 and Open Question after Remark~1.8]{BuchstaberGlutsyuk2019}.  We prove the real-order theorem for all nonnegative orders.  Belton, Guillot, Khare, and Putinar asked whether totally positive Pólya-frequency sequences are dense in all Pólya-frequency sequences \cite[Question~12.2]{BeltonGuillotKharePutinar2023}; we prove the product-topology version of this density statement.

We answer these two questions in the following precise forms.

\begin{theorem}[Modified-Bessel kernel with real orders]
\label{thm:bessel-main}
Let $x$ range over positive real arguments and let $s$ range over nonnegative real orders.  For every $m\ge1$, every
\[
        0<x_1<\cdots<x_m,\qquad 0\le s_1<\cdots<s_m,
\]
one has
\[
        \det[I_{s_j}(x_i)]_{i,j=1}^m>0.
\]
Equivalently, $K(x,s)=I_s(x)$ is $\STP_\infty$ on $(0,\infty)\times[0,\infty)$.
\end{theorem}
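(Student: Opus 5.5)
The plan is to follow the spectral Darboux mechanism described in the abstract. The modified Bessel functions are the spectral family of a one-dimensional Sturm--Liouville problem in the variable $t=\log x$. Writing $u(t)=I_s(e^t)$, the equation $x^2I''+xI'-(x^2+s^2)I=0$ becomes
\[
-u''+e^{2t}u=-s^2u ,
\]
so $u_s=I_s(e^{t})$ are eigenfunctions of the Schrödinger operator $L=-\partial_t^2+e^{2t}$ with spectral parameter $\lambda=-s^2$. The problem therefore reduces to a Wronskian statement in $t$ for a family of solutions ordered by the spectral parameter. Two facts about this family will be used:
\begin{itemize}[label={}]
\item \emph{Positivity:} $u_s>0$ on $\R$, which follows from the power series.
\item \emph{Ordered endpoint asymptotics:} as $t\to-\infty$,
\[
u_s\sim \frac{e^{st}}{2^s\Gamma(s+1)} ,
\]
and this endpoint is the one where the solutions are recessive. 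A larger $s$ decays faster there.
\end{itemize}

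\textbf{Step 1.} Reduce $\STP_\infty$ to positivity of the generalized Wronskians
\[
W(u_{s_1},\dots,u_{s_m})(t)>0 \quad\text{for all } t \text{ and all } s_1<\dots<s_m .
\]
This is the standard route: an extended-total-positivity argument, or integration of Wronskians over ordered $x$-tuples via the basic composition formula, upgrades Wronskian positivity in $x$ to positivity of all determinants with distinct $x_i$. Equivalently, one shows that every nontrivial combination $\sum c_ju_{s_j}$ has at most $m-1$ zeros counting multiplicity, with the sign fixed by continuity/deformation from a configuration where the sign is known, such as $x\to0$.

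\textbf{Step 2.} Prove Wronskian positivity by Darboux induction on $m$. Set $v=u_{s_1}>0$ and form the Darboux transforms
\[
\tilde u_j = W(v,u_{s_j})/v, \qquad j\ge2 .
\]
These are eigenfunctions of the transformed operator
\[
\tilde L = -\partial_t^2 + e^{2t} - 2(\log v)'' ,
\]
with the same eigenvalues $-s_j^2$, and they satisfy the Crum identity
\[
W(u_{s_1},\dots,u_{s_m}) = v^{m}\, W(\tilde u_2,\dots,\tilde u_m)\cdot v^{-(m-1)}\cdots
\]
up to the standard positive factor. The induction hypothesis should be formulated for an abstract class of potentials, not just $e^{2t}$. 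The class I would use consists of potentials $q$ that are bounded below near $-\infty$, grow at $+\infty$, and have solutions with the ordered power-type asymptotics at $-\infty$. The inductive step then has two tasks:
\begin{itemize}[label={}]
\item Show $\tilde u_j>0$. For the $2\times2$ Wronskian, $(W(v,u))' = (\lambda_v-\lambda_u)vu = (s_j^2-s_1^2)\,vu>0$. Also $W(v,u_{s_j})\to0$ at $-\infty$, since the leading asymptotic gives $W\sim(s_j-s_1)e^{(s_1+s_j)t}\to0$. Hence $W(v,u_{s_j})>0$ everywhere.
\item Show the $\tilde u_j$ inherit ordered asymptotics at $-\infty$. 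One computes $\tilde u_j\sim c_j e^{s_jt}$ with $c_j>0$, and the transformed potential still behaves like $\text{const}/\!\!\ \ $plus decaying terms.
\end{itemize}
The base case $m=1$ is positivity.

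\textbf{Main obstacle.} I expect the hardest part to be making the class of spectral families closed under the Darboux step, uniformly enough to iterate. The new potential contains $-2(\log v)''$, and near $-\infty$ this is exponentially small corrections to $0$. The asymptotics of the transforms must be controlled with error terms, including when $s_j-s_1$ is non-integer. The case $s_1=0$ also deserves care, but $I_0\to1$ still gives a genuine exponent ordering.

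My first approach would avoid explicit potentials. I would state the induction purely in terms of the abstract conditions:
\begin{itemize}[label={}]
\item solutions of $-u''+qu=\lambda u$;
\item positivity;
\item $u_j=e^{s_jt}\bigl(c_j+o(1)\bigr)$ with matching derivative asymptotics at $-\infty$, where $s_j$ is strictly increasing.
\end{itemize}
I would then verify directly that $W(v,u_j)/v$ satisfies the same conditions with exponents $s_j$ and constants $c_jc_1(s_j-s_1)/c_1$. Once the class is closed, Step 1 converts the positive Wronskians into $\det[I_{s_j}(x_i)]>0$.
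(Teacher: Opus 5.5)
Your proposal is correct and follows essentially the paper's route. Both use the substitution $t=\log x$, Darboux transforms whose positivity comes from $W(v,u_j)'=(s_j^2-s_1^2)vu_j>0$ together with $W\to0$ at $-\infty$, the Crum identity $W(v,u_2,\dots,u_m)=v\,W(\tilde u_2,\dots,\tilde u_m)$, and the positive-Wronskian (extended Chebyshev) criterion. The one difference is that you propagate the endpoint asymptotics in an abstract $o(1)$ class, while the paper carries explicit $O(e^{2y})$ Bessel expansions through each stage. Your class does close: the eigenvalue equations give $\tilde u_j'=(s_j^2-s_1^2)u_j+(v'/v)^2u_j-(v'/v)u_j'$, so no information about the potential is needed.
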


\begin{theorem}[Product-topology density of totally positive Pólya-frequency sequences]
\label{thm:pf-density-intro}
Let $a=(a_n)_{n\in\Z}$ be a two-sided Pólya-frequency sequence.  Then there exist two-sided Pólya-frequency sequences $a^{(r)}=(a_n^{(r)})_{n\in\Z}$ whose Toeplitz kernels are strictly totally positive and such that
\[
        a_n^{(r)}\to a_n\qquad(n\in\Z).
\]
Thus totally positive Pólya-frequency sequences are dense in Pólya-frequency sequences for the product topology on $\R^\Z$.
\end{theorem}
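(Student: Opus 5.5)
We prove Theorem~\ref{thm:pf-density-intro} by Toeplitz smoothing. Convolve $a$ with a strictly totally positive Pólya-frequency sequence that converges to the delta sequence $\delta_0$. The natural choice is the discrete Gaussian (theta) sequence
\[
        g^{(r)}_n = c_r\, q_r^{\,n^2},\qquad 0<q_r<1,\ q_r\to 0 ,
\]
normalized so that $g^{(r)}_0=1$, i.e.\ $c_r=1$. Its Toeplitz kernel $(i,j)\mapsto q^{(i-j)^2}=q^{i^2}q^{j^2}(q^{-2})^{ij}$ is a diagonal rescaling of the generalized Vandermonde kernel $(i,j)\mapsto \rho^{ij}$ with $\rho=q^{-2}>1$. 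That kernel is strictly totally positive on $\Z\times\Z$ because $x^{\alpha}$ with $x=\rho^i$ increasing and exponents $\alpha=j$ increasing is $\STP$. Since diagonal scaling by positive factors preserves strict total positivity, $g^{(r)}$ is an $\STP$ Pólya-frequency sequence, and $g^{(r)}_n\to\delta_{n0}$ pointwise, with $\sum_{n\ne0} q_r^{n^2}\to0$.

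Next, set $a^{(r)}=a * g^{(r)}$, that is,
\[
        a^{(r)}_n=\sum_k a_k\, g^{(r)}_{n-k}.
\]
Convergence of this sum must be checked. A PF sequence is log-concave with no internal zeros, so its support is an interval and it has at most exponential growth or decay, $|a_k|\le C e^{\beta|k|}$. The Gaussian decay $q^{k^2}$ dominates this, so the series converges absolutely. The Toeplitz kernel of $a^{(r)}$ is the composition $T_a T_{g^{(r)}}$. By the Cauchy--Binet formula (the basic composition formula for countable sums, which is justified by absolute convergence), each minor of $T_{a^{(r)}}$ is a sum of products of minors of $T_a$ (which are $\ge0$) and minors of $T_g$ (which are $>0$). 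Hence $a^{(r)}$ is PF.

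Strictness requires at least one strictly positive term in each such sum. Choose $m$ rows $i_1<\dots<i_m$ of $T_a$. Because $a\neq 0$ is PF with interval support, the rows $i\mapsto a_{i-k}$ are shifts. Pick intermediate indices $k_\ell=i_\ell - n_0$ with $a_{n_0}>0$. Then the minor $\det[a_{i_p-k_\ell}]$ has diagonal entries $a_{n_0}>0$, but it need not be positive. Instead I will argue via the continuous analogue: the sum over all $k_1<\dots<k_m$ is a nonnegative combination with strictly positive weights $\det[g_{k_\ell-j_t}]$. So it suffices that some $m\times m$ minor of $T_a$ with rows $i_1<\dots<i_m$ is nonzero. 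This holds by a standard fact: for a nonzero PF sequence, the rows of its Toeplitz matrix indexed by any $m$ distinct integers are linearly independent, since shifts of a nonzero finitely-or-infinitely supported sequence are independent. That is precisely the existence of a nonzero $m$-minor, and that minor is then $>0$ because all minors are $\ge 0$. The trivial case $a\equiv0$ needs separate handling. We can take $a^{(r)}=\epsilon_r g^{(r)}$ with $\epsilon_r\to0$, or exclude $a\equiv0$ by the paper's convention.

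Finally, pointwise convergence: $a^{(r)}_n-a_n=\sum_{k\ne n}a_k q_r^{(n-k)^2}$, which tends to $0$ by dominated convergence using $|a_k|\le Ce^{\beta|k|}$ and $q_r\le q_1$. The main obstacle I expect is this growth/summability control in the two-sided case. Two-sided PF sequences can grow exponentially in one or both directions, as with generating functions having poles in an annulus. So the exponential bound from log-concavity must be proved carefully, including the boundary cases where the support is a half-line or finite.
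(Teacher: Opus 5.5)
\textbf{Gap: the bilateral geometric case.} Your smoothing strategy matches the paper's: discrete Gaussian $q^{n^2}$, Cauchy--Binet, and one nonzero minor of $T_a$. Your exponential growth bound via log-concavity is correct, and more elementary than the paper's Cauchy-estimate argument, so growth control is not the real obstacle. The gap is in the strictness step. You claim that for every nonzero P\'olya-frequency sequence, any $m$ distinct translates are linearly independent. That is false.

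The bilateral geometric sequences $a_n=C\rho^n$ ($C,\rho>0$) are P\'olya-frequency sequences, because $T_a(i,j)=(C\rho^i)(\rho^{-j})$ is a rank-one kernel with positive factors. Yet all their translates are proportional: $a_{n-k}=\rho^{-k}a_n$. So every minor of $T_a$ of size $m\ge2$ vanishes, and your construction cannot work here at all. Indeed $(a*g^{(r)})_n=C\rho^n\sum_j q_r^{j^2}\rho^{-j}$ is a positive multiple of $a$, so its Toeplitz kernel is again rank one and no $2\times2$ minor is positive. Your justification (``shifts of a nonzero finitely-or-infinitely supported sequence are independent'') holds only for finite or half-line support, where you can compare extreme nonzero indices. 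For bi-infinite support it needs a genuine argument, and it fails for the geometric sequences.

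\textbf{How to close it.} You need two additions.
\begin{enumerate}
\item Independence of translates for every nonzero \emph{non-geometric} P\'olya-frequency sequence. The paper obtains this from the Aissen--Schoenberg--Whitney--Edrei representation: such an $a$ has a Laurent generating function $A(z)$ converging on a nonempty annulus. A dependence $\sum_s c_s a_{n-j_s}=0$ would then give $P(z)A(z)=0$ there, with $P(z)=\sum_s c_s z^{j_s}\neq0$, which is impossible.
\item A separate approximation for the geometric case, for instance the multiplicative Gaussian tilt $b^{(\varepsilon)}_n=C\rho^n e^{-\varepsilon n^2}$. Since $b^{(\varepsilon)}_{i-j}=C\rho^i\rho^{-j}e^{-\varepsilon i^2}e^{-\varepsilon j^2}e^{2\varepsilon ij}$, every Toeplitz minor is a positive row and column rescaling of a generalized Vandermonde determinant in the increasing bases $e^{2\varepsilon i}$. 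Hence it is positive, and $b^{(\varepsilon)}_n\to a_n$ pointwise as $\varepsilon\downarrow0$.
\end{enumerate}
Your treatment of $a\equiv0$ via $\epsilon_r g^{(r)}$ is fine.
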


The proofs are intentionally short and structural.  Theorem \ref{thm:bessel-main} follows from a Sturm--Darboux mechanism for positive spectral families, using the Darboux--Crum transform \cite{Darboux1882,Crum1955} and the Wronskian/extended-Chebyshev criterion \cite{Karlin1968,Pinkus2010,Coppel1971}.  Theorem \ref{thm:pf-density-intro} follows from smoothing by a strictly totally positive discrete Gaussian Toeplitz kernel.

\subsection*{Relation to prior work and scope}

The two results below are precise additions to existing total-positivity theory.  For the Bessel kernel, the integer-order statement is due to Buchstaber and Glutsyuk \cite{BuchstaberGlutsyuk2019}; the contribution here is the passage from integer indices $j\in\Z_{\ge0}$ to real orders $s\in[0,\infty)$.  The proof uses a Sturm--Darboux argument \cite{Darboux1882,Crum1955} and the extended-Chebyshev-system criterion \cite{Karlin1968,Pinkus2010,Coppel1971}, rather than the Hilbert-space positive-quadrant flow used in \cite{BuchstaberGlutsyuk2019}.  Classical real-parameter Bessel kernels in the total-positivity literature, such as those treated by Karlin \cite{Karlin1968}, concern different kernels.  The order-evaluation kernel $(x,s)\mapsto I_s(x)$ therefore requires a separate argument.

For Pólya-frequency sequences, Belton--Guillot--Khare--Putinar developed a broad theory of preservers and transforms of totally positive kernels and Pólya-frequency functions \cite{BeltonGuillotKharePutinar2023}.  The result here answers their density question \cite[Question~12.2]{BeltonGuillotKharePutinar2023} for two-sided sequences in the pointwise/product topology.  No uniform, weighted, or norm-density assertion is made.

\section{Preliminaries}

\begin{definition}
Let $X$ and $Y$ be totally ordered sets.  A kernel $K:X\times Y\to\R$ is totally nonnegative of order $r$, written $\TN_r$, if
\[
        \det[K(x_i,y_j)]_{i,j=1}^m\ge0
\]
for every $1\le m\le r$, every $x_1<\cdots<x_m$ in $X$, and every $y_1<\cdots<y_m$ in $Y$.  It is strictly totally positive of order $r$, written $\STP_r$, if all these determinants are strictly positive.  The notation $\TN_\infty$ and $\STP_\infty$ means that the property holds for every $r$.
\end{definition}

\begin{definition}
Following the standard Toeplitz-kernel convention for Pólya-frequency sequences \cite{AissenSchoenbergWhitney1952,Edrei1952,Edrei1953,Karlin1968,BeltonGuillotKharePutinar2023}, a two-sided sequence $a=(a_n)_{n\in\Z}$ is a Pólya-frequency sequence if the Toeplitz kernel
\[
        T_a(i,j)=a_{i-j},\qquad i,j\in\Z,
\]
is $\TN_\infty$.  It is a totally positive Pólya-frequency sequence if $T_a$ is $\STP_\infty$.
\end{definition}

Throughout, the product topology on $\R^\Z$ means the topology of coordinatewise convergence: $a^{(r)}\to a$ if and only if $a_n^{(r)}\to a_n$ for every fixed $n\in\Z$.

We use the following elementary form of the generalized Vandermonde positivity of the exponential kernel; see Karlin \cite{Karlin1968} and Pinkus \cite{Pinkus2010} for the standard exponential-kernel viewpoint.

\begin{lemma}[Generalized Vandermonde positivity]
\label{lem:vandermonde}
If $u_1<\cdots<u_m$ and $v_1<\cdots<v_m$ are real, then
\[
        \det[e^{u_i v_j}]_{i,j=1}^m>0.
\]
Equivalently, if $0<t_1<\cdots<t_m$ and $\alpha_1<\cdots<\alpha_m$, then
\[
        \det[t_i^{\alpha_j}]_{i,j=1}^m>0.
\]
\end{lemma}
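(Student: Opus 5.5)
I plan to prove the generalized Vandermonde positivity by the classical Descartes-rule/Rolle induction, which gives nonvanishing, and then fix the sign by a continuity/deformation argument. First I reduce the two forms to each other. Setting $t_i=e^{u_i}$ and $\alpha_j=v_j$ gives $e^{u_iv_j}=t_i^{\alpha_j}$, and $u\mapsto e^u$ is strictly increasing from $\R$ onto $(0,\infty)$. So it suffices to prove $\det[t_i^{\alpha_j}]>0$ for $0<t_1<\cdots<t_m$ and $\alpha_1<\cdots<\alpha_m$.

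\emph{Step 1 (nonvanishing).} I claim that a nontrivial exponential sum $f(u)=\sum_{j=1}^m c_je^{v_ju}$ with distinct real $v_j$ has at most $m-1$ real zeros. I would prove this by induction on $m$. The case $m=1$ is clear. For the inductive step, multiply $f$ by $e^{-v_1u}$, which does not change the zeros. Then differentiate: this kills the first term and leaves an exponential sum with $m-1$ distinct exponents $v_j-v_1$ and coefficients $c_j(v_j-v_1)$. That sum is nontrivial unless $c_2=\cdots=c_m=0$, in which case $f=c_1e^{v_1u}$ has no zeros. By Rolle's theorem, $m$ zeros of $f$ would give $m-1$ zeros of the derivative, contradicting the inductive hypothesis. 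Now suppose $\det[e^{u_iv_j}]=0$. Then some nonzero vector $c$ satisfies $f(u_i)=0$ for all $m$ distinct $u_i$, which is a contradiction. Hence the determinant never vanishes.

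\emph{Step 2 (sign).} The set $\{(u,v)\in\R^m\times\R^m: u_1<\cdots<u_m,\ v_1<\cdots<v_m\}$ is a product of two convex sets, so it is connected. The determinant is continuous and nonvanishing on it, so its sign is constant there. To find that sign I evaluate at the point $u_i=i$ and $v_j=\lambda j$. With $q=e^{\lambda}$ the entries are $q^{ij}$, so the matrix is the ordinary Vandermonde matrix in the nodes $x_i=q^{i}$, raised to powers $j$. Its determinant equals $\prod_i x_i\cdot\prod_{i<k}(x_k-x_i)$. For $\lambda>0$ the nodes are increasing and positive, so this product is strictly positive. Therefore the determinant is positive on the whole region, which proves the lemma.

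I expect no serious obstacle. The only point that needs care is the degenerate case in the induction, where the differentiated sum could be trivial, and I have handled it explicitly above. The connectedness used in Step 2 is immediate from convexity.
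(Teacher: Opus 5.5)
Your proof is correct, but it fixes the sign by a different device than the paper. The paper computes $W(e^{v_1u},\ldots,e^{v_ku})=e^{u(v_1+\cdots+v_k)}\prod_{i<j\le k}(v_j-v_i)>0$ for every $k$ and then appeals to its general Lemma~\ref{lem:ect}. That lemma bounds the zeros by Rolle reduction after division by the preceding Wronskians. It holds the exponents $v$ fixed, so the sign is constant on the chamber $u_1<\cdots<u_m$. It then identifies the sign by letting $u_i\to u_1$, where the determinant divided by the Vandermonde in $u$ tends to a positive multiple of the Wronskian.

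Your Step~1 is exactly the exponential case of that Rolle reduction: multiplying by $e^{-v_1u}$ and differentiating is division by $W(e^{v_1u})$ followed by differentiation. You only need to count distinct zeros. Your Step~2 replaces the confluent limit by a joint deformation in $(u,v)$ through a convex region to the explicit point $u_i=i$, $v_j=\lambda j$. There the matrix is the classical $[x_i^j]$ with $x_i=e^{\lambda i}$.

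What you gain is a fully self-contained argument that avoids the divided-difference step, which Lemma~\ref{lem:ect} only sketches. The paper's route gains the stronger extended-Chebyshev conclusion (positive Wronskians, zeros counted with multiplicity). It is also more economical, since Lemma~\ref{lem:ect} is needed anyway to pass from Wronskians to determinants in the Bessel theorem.
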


\begin{proof}
The second assertion is the first with $u_i=\log t_i$ and $v_j=\alpha_j$.  The first assertion is classical and follows, for example, by the usual divided-difference proof of strict total positivity of the exponential kernel \cite{Karlin1968,Pinkus2010}.  A self-contained argument is as follows.  The functions $u\mapsto e^{uv_j}$ form an extended complete Chebyshev system on $\R$ because their Wronskian is
\[
        \det\left[\frac{d^{i-1}}{du^{i-1}}e^{u v_j}\right]_{i,j=1}^m
        =e^{u(v_1+\cdots+v_m)}\prod_{1\le i<j\le m}(v_j-v_i)>0.
\]
Positive Wronskians imply positive interpolation determinants at $u_1<\cdots<u_m$; see Lemma \ref{lem:ect} below.
\end{proof}

\begin{lemma}[Wronskians and evaluation determinants]
\label{lem:ect}
Let $f_1,\ldots,f_m\in C^{m-1}(a,b)$.  Suppose that for every $1\le k\le m$,
\[
        W(f_1,\ldots,f_k)(y)>0\qquad(a<y<b),
\]
where
\[
        W(f_1,\ldots,f_k)=\det[f_j^{(i-1)}]_{i,j=1}^k.
\]
Then
\[
        \det[f_j(y_i)]_{i,j=1}^m>0
\]
for every $a<y_1<\cdots<y_m<b$.
\end{lemma}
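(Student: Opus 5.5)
We argue by induction on $m$, using the classical Pólya factorization of a Wronskian-positive system, and we induct on the stronger statement that the conclusion holds for every $m$-tuple satisfying the hypotheses on an arbitrary interval. For $m=1$ the hypothesis $W(f_1)=f_1>0$ is exactly the claim. For the inductive step we set $w_k=W(f_1,\ldots,f_k)$, with $w_0=1$, so every $w_k$ is positive on $(a,b)$.

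The reduction step divides by $f_1>0$ and differentiates: put $g_j=(f_{j+1}/f_1)'$ for $1\le j\le m-1$. The standard Wronskian identities $W(hf_1,\ldots,hf_k)=h^kW(f_1,\ldots,f_k)$ and $W(1,\phi_2,\ldots,\phi_k)=W(\phi_2',\ldots,\phi_k')$ give
\[
  W(g_1,\ldots,g_k)=\frac{W(f_1,\ldots,f_{k+1})}{f_1^{\,k+1}}>0\qquad(1\le k\le m-1).
\]
One should check that the $g_j$ have enough smoothness. The Wronskian $W(g_1,\ldots,g_{m-1})$ needs $m-2$ derivatives of $g_j$, that is, $m-1$ derivatives of $f_{j+1}/f_1$, and $f\in C^{m-1}$ supplies them. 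Hence the induction hypothesis applies to $g_1,\ldots,g_{m-1}$ on $(a,b)$, and all of their evaluation determinants at increasing points are positive.

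Now fix $a<y_1<\cdots<y_m<b$. First factor out $\prod_i f_1(y_i)>0$, leaving $\det[\phi_j(y_i)]$ with $\phi_1=1$ and $\phi_j=f_j/f_1$. Next subtract consecutive rows. In each row the first column becomes $1,0,\ldots,0$, and the remaining entries in row $i\ge2$ are $\phi_{j}(y_i)-\phi_{j}(y_{i-1})=\int_{y_{i-1}}^{y_i}g_{j-1}(t)\,dt$. Expanding along the first column, the determinant equals
\[
  \det\Bigl[\int_{y_{i}}^{y_{i+1}} g_j(t)\,dt\Bigr]_{i,j=1}^{m-1}
  =\int_{y_1}^{y_2}\!\!\cdots\!\int_{y_{m-1}}^{y_m}\det[g_j(t_i)]_{i,j=1}^{m-1}\,dt_1\cdots dt_{m-1},
\]
by multilinearity of the determinant in its rows. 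On the domain of integration $t_1<t_2<\cdots<t_{m-1}$, because the intervals $[y_i,y_{i+1}]$ are consecutive and meet only at endpoints, a measure-zero set. The integrand is therefore positive almost everywhere by the induction hypothesis, and the integral is strictly positive. This closes the induction.

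The only delicate point is this final step. We must confirm that the ordering of the $t_i$ is forced by the disjointness of the intervals, so that the induction hypothesis applies pointwise with the correct sign, and that the Wronskian identity correctly carries positivity of $W(f_1,\ldots,f_{k+1})$ over to positivity of $W(g_1,\ldots,g_k)$ for every $k$.
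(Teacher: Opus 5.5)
Your proof is correct, and it takes a genuinely different route from the paper's.

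The paper works through the Chebyshev (zero-counting) property. Using Rolle's theorem after dividing by the preceding nonvanishing Wronskians, it shows that every nontrivial combination of $f_1,\ldots,f_k$ has at most $k-1$ zeros counted with multiplicity. It deduces that $\det[f_j(y_i)]$ never vanishes on the chamber $y_1<\cdots<y_m$, so by connectedness it has constant sign there. It then identifies that sign by letting the nodes coalesce, where the determinant divided by the Vandermonde factor tends to a positive multiple of $W(f_1,\ldots,f_m)(y_1)$.

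You instead push the reduction $g_j=(f_{j+1}/f_1)'$ directly through the determinant and use the composition identity $\det\bigl[\int_{y_i}^{y_{i+1}}g_j\bigr]=\int\!\cdots\!\int\det[g_j(t_i)]\,dt$. Positivity is then produced by induction, with no zero counting, no connectedness argument and no confluent limit. The points you flag all check out:
\begin{itemize}
\item $g_j\in C^{m-2}$, which is what the inductive hypothesis needs;
\item $W(g_1,\ldots,g_k)=W(f_1,\ldots,f_{k+1})/f_1^{k+1}>0$ for each $k\le m-1$;
\item on the box $\prod_i[y_i,y_{i+1}]$ the integrand is $\ge 0$ everywhere, vanishes only where two consecutive $t_i$ coincide (two equal rows), and is $>0$ on the open box, which has positive measure.
\end{itemize}
Your reduction is the paper's own Darboux step up to a positive factor, since $L_{f_1}f_{j+1}=f_1g_j$. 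So your argument fits naturally with Lemma~\ref{lem:darboux-crum}.

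What your route buys is a short, fully self-contained proof with an explicit positive integral representation. It avoids the multiplicity-counting Rolle argument and the divided-difference limit, which the paper only sketches. What the paper's route buys is the stronger extended-Chebyshev conclusion, the bound on zeros with multiplicity, which also controls confluent determinants with repeated nodes. Your argument yields exactly the distinct-node statement of the lemma, which is all that the proof of Theorem~\ref{thm:bessel-main} requires.
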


\begin{proof}
This is the standard implication from positive initial Wronskians to an oriented extended complete Chebyshev system \cite{Karlin1968,Pinkus2010,Coppel1971}.  For completeness we recall the argument.  The Wronskian assumptions imply that every nontrivial linear combination of $f_1,\ldots,f_k$ has at most $k-1$ zeros, counted with multiplicity, on $(a,b)$.  Otherwise repeated Rolle reduction after division by the preceding nonvanishing Wronskians would contradict the positivity of $W(f_1,\ldots,f_k)$.  Hence the interpolation determinant cannot vanish at $y_1<\cdots<y_m$.  Its sign is constant on the chamber $y_1<\cdots<y_m$.  Letting $y_i\to y_1$ in increasing order identifies the sign with the positive Wronskian $W(f_1,\ldots,f_m)(y_1)$ times the positive Vandermonde factor from divided differences.  Thus the determinant is positive throughout the chamber.
\end{proof}

\begin{lemma}[Darboux--Crum Wronskian identity]
\label{lem:darboux-crum}
Let $u$ be a positive $C^n$ function on an interval and set
\[
        L_u h=h'-\frac{u'}{u}h.
\]
For $h_1,\ldots,h_n\in C^n$,
\[
        W(L_u h_1,\ldots,L_u h_n)
        =
        \frac{W(u,h_1,\ldots,h_n)}{u}.
\]
\end{lemma}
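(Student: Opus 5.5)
The Darboux--Crum identity reduces to the classical quotient rule for Wronskians. The plan is to write $L_u h = u\,(h/u)'$ and then use two standard facts. The first is the scaling rule $W(g f_1,\ldots,g f_n)=g^n W(f_1,\ldots,f_n)$. The second is the reduction rule
\[
W(u,h_1,\ldots,h_n)=u^{n+1}\,W\bigl((h_1/u)',\ldots,(h_n/u)'\bigr).
\]
Combining them gives
\[
W(L_u h_1,\ldots,L_u h_n)=u^n\,W\bigl((h_1/u)',\ldots,(h_n/u)'\bigr)=\frac{u^{n+1}}{u}\,W\bigl((h_1/u)',\ldots,(h_n/u)'\bigr)=\frac{W(u,h_1,\ldots,h_n)}{u},
\]
which is the claim.

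The first step is the scaling rule. Row $i$ of the Wronskian matrix of $(g f_j)$ is $(g f_j)^{(i-1)}$, which by Leibniz equals $\sum_{k\le i-1}\binom{i-1}{k} g^{(i-1-k)} f_j^{(k)}$. This expresses the matrix as a lower-triangular matrix with diagonal entries $g$, multiplied by the Wronskian matrix of $(f_j)$. Taking determinants gives the factor $g^n$.

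The second step is the reduction rule. Apply the scaling rule with $g=u$ and $f_0=1$, $f_j=h_j/u$, so that
\[
W(u,h_1,\ldots,h_n)=u^{n+1}W(1,h_1/u,\ldots,h_n/u).
\]
In $W(1,f_1,\ldots,f_n)$ the first column is $(1,0,\ldots,0)^T$. Expanding along it leaves the $n\times n$ matrix $[f_j^{(i)}]_{i,j=1}^n$, and this is exactly $W(f_1',\ldots,f_n')$. No sign appears, because the expansion uses the $(1,1)$ entry.

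Regularity needs a short check. Since $u>0$ is $C^n$ and $h_j\in C^n$, the functions $h_j/u$ are $C^n$, so $(h_j/u)'$ is $C^{n-1}$ and the $n\times n$ Wronskian is defined. Likewise $W(u,h_1,\ldots,h_n)$ needs only derivatives up to order $n$, so all objects exist.

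The main point needing care is the triangular factorization in the scaling rule, in particular the indexing. I would verify that row $i$ involves only $f^{(k)}$ with $k\le i-1$ and that the coefficient of $f^{(i-1)}$ is exactly $g$. Everything else is bookkeeping.
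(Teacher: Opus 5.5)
Your proposal is correct and follows essentially the same route as the paper: substitute $h_j=u\,g_j$ with $g_j=h_j/u$, note $L_u h_j=u\,g_j'$, then use the common-factor scaling rule and $W(1,g_1,\ldots,g_n)=W(g_1',\ldots,g_n')$. The only difference is that you also prove the two standard Wronskian facts (via the Leibniz triangular factorization and expansion along the first column) and check regularity, which the paper takes for granted.
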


\begin{proof}
This is the one-step Wronskian identity underlying the Darboux--Crum construction for Sturm--Liouville systems \cite{Darboux1882,Crum1955}.
Write $h_j=ug_j$.  Then $L_u h_j=u g_j'$.  Since multiplying every function in a Wronskian by the same nonvanishing factor multiplies the Wronskian by the corresponding power of that factor,
\[
        W(u,h_1,\ldots,h_n)
        =u^{n+1}W(1,g_1,\ldots,g_n)
        =u^{n+1}W(g_1',\ldots,g_n')
\]
and
\[
        W(L_u h_1,\ldots,L_u h_n)
        =u^n W(g_1',\ldots,g_n').
\]
Dividing the first identity by $u$ gives the claim.
\end{proof}

\section{The real-order modified-Bessel kernel}

This section proves Theorem \ref{thm:bessel-main}.  The point is to regard the Bessel order as a spectral parameter.

\begin{proposition}[Spectral Darboux strictification]
\label{prop:darboux}
Let $q^{[0]}$ and $f_1^{[0]},\ldots,f_m^{[0]}$ be $C^\infty$ functions on $\R$, and suppose that $f_1^{[0]},\ldots,f_m^{[0]}$ are positive solutions of
\[
        (f_j^{[0]})''=(q^{[0]}+\lambda_j)f_j^{[0]},
        \qquad \lambda_1<\cdots<\lambda_m.
\]
Let $0\le \alpha_1<\cdots<\alpha_m$.  Starting from $r=0$, define Darboux transforms stage by stage as follows.  Whenever $f_{r+1}^{[r]}$ has no zero on $\R$, set
\[
        q^{[r+1]}=q^{[r]}-2(\log f_{r+1}^{[r]})''
\]
and, for $j>r+1$,
\[
        f_j^{[r+1]}
        =\left(\frac{d}{dy}-\frac{(f_{r+1}^{[r]})'}{f_{r+1}^{[r]}}\right)f_j^{[r]}.
\]
Assume that, whenever a stage is reached, $q^{[r]}$ and the remaining $f_j^{[r]}$ are $C^\infty$ and
\[
        (f_j^{[r]})''=(q^{[r]}+\lambda_j)f_j^{[r]},
        \qquad j>r.
\]
Assume also that, at each reached stage, the remaining functions have the ordered left-endpoint asymptotics and one differentiated asymptotic:
\[
        f_j^{[r]}(y)=C_{r,j}e^{\alpha_j y}(1+o(1)),
        \qquad C_{r,j}>0,
        \qquad y\to-\infty,
\]
and
\[
        (f_j^{[r]})'(y)-\alpha_j f_j^{[r]}(y)=o(e^{\alpha_j y}),
        \qquad y\to-\infty,
\]
for every $j>r$.  Then all stages are reached, all Darboux pivots are positive, and
\[
        W(f_1^{[0]},\ldots,f_k^{[0]})(y)>0,
        \qquad 1\le k\le m,\quad y\in\R.
\]
\end{proposition}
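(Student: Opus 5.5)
The argument is an induction on the Darboux stage $r$. At each stage we show that the next pivot $f_{r+1}^{[r]}$ is strictly positive on $\R$. We then convert these positivity statements into Wronskian positivity using Lemma \ref{lem:darboux-crum}.

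\textbf{Step 1: positivity of the pivot.} Suppose stage $r$ is reached, and write $u=f_{r+1}^{[r]}$. For $r=0$ positivity is a hypothesis. For $r\ge1$ we argue from the previous stage. Put $v=f_r^{[r-1]}>0$ (the previous pivot) and $h=f_{r+1}^{[r-1]}$. Then $u=h'-(v'/v)h=W(v,h)/v$. Both $v$ and $h$ solve Schrödinger equations with the same potential $q^{[r-1]}$ and spectral parameters $\lambda_r<\lambda_{r+1}$. Differentiating the Wronskian gives
\[
        W(v,h)'=vh''-v''h=(\lambda_{r+1}-\lambda_r)\,vh .
\]
To exploit this, we first need $h>0$. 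The plan is therefore to strengthen the induction: at stage $r$, \emph{all} remaining $f_j^{[r]}$, $j>r$, are positive, not just the pivot. For $r=0$ this is assumed.

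Now let $v=f_{r}^{[r-1]}$ and $h=f_j^{[r-1]}$ with $j>r$, both positive, and set $w=W(v,h)$. The asymptotic hypotheses give, as $y\to-\infty$,
\[
        w=vh'-v'h=v(h'-\alpha_j h)-h(v'-\alpha_r v)+(\alpha_j-\alpha_r)vh .
\]
The last term equals $(\alpha_j-\alpha_r)C C' e^{(\alpha_r+\alpha_j)y}(1+o(1))$. The first two terms are $o(e^{(\alpha_r+\alpha_j)y})$. Hence $w\to0$ as $y\to-\infty$, and $w>0$ near $-\infty$. Since $w'=(\lambda_j-\lambda_r)vh>0$ everywhere, $w$ is strictly increasing from the limit $0$, so $w>0$ on $\R$. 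It follows that $f_j^{[r]}=w/v>0$ for all $j>r$. In particular the new pivot $f_{r+1}^{[r]}$ has no zero, so stage $r+1$ is reached (given the assumed smoothness and equations).

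The main obstacle is exactly this positivity at $-\infty$. It relies on $\alpha_j>\alpha_r$ together with the differentiated asymptotics, which make the leading term survive. One subtlety needs checking: $\alpha_r\ge0$ does not by itself give $w\to0$. However, $w=O(e^{(\alpha_r+\alpha_j)y})$ and $\alpha_r+\alpha_j>0$, because $\alpha_j>\alpha_r\ge0$. So the limit is indeed $0$, and monotonicity finishes the argument.

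\textbf{Step 2: Wronskians.} We prove by induction on $k$, applied to all stages simultaneously, the identity
\[
        W(f_1^{[0]},\ldots,f_k^{[0]})=\prod_{r=0}^{k-1} f_{r+1}^{[r]}.
\]
Apply Lemma \ref{lem:darboux-crum} with $u=f_1^{[0]}$ and $h_i=f_{i+1}^{[0]}$. This gives $W(f_1^{[0]},\ldots,f_k^{[0]})=f_1^{[0]}\,W(f_2^{[1]},\ldots,f_k^{[1]})$. Iterating through the stages yields the product. Each factor is a positive pivot by Step 1, so the Wronskian is positive for every $1\le k\le m$ and every $y\in\R$. The required smoothness for the lemma is part of the hypotheses.
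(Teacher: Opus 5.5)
Your proof is correct and follows the paper's argument essentially step for step. Like the paper, you strengthen the induction to positivity of all remaining $f_j^{[r]}$, combine $W(f_{r+1}^{[r]},f_j^{[r]})'=(\lambda_j-\lambda_{r+1})f_{r+1}^{[r]}f_j^{[r]}>0$ with the endpoint asymptotics (you conclude by monotonicity where the paper writes $W$ as $\int_{-\infty}^y$, an inessential difference), and then iterate Lemma~\ref{lem:darboux-crum} to express each Wronskian as the product of the positive pivots.
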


\begin{proof}
We prove positivity of the Darboux pivots and of all transformed functions by induction on the stage.  At stage $0$ the functions are positive by hypothesis.  Suppose stage $r$ has been reached and $f_j^{[r]}>0$ for all $j>r$.  For $j>r+1$,
\[
        W(f_{r+1}^{[r]},f_j^{[r]})'
        =(\lambda_j-\lambda_{r+1})f_{r+1}^{[r]}f_j^{[r]}>0.
\]
The Darboux transform is globally defined once the pivot is positive.  The elementary Darboux calculation underlying the Darboux--Crum construction \cite{Darboux1882,Crum1955} gives the transformed equation.  Indeed, if $u''=(q+\mu)u$, $h''=(q+\lambda)h$, and $L_uh=h'-(u'/u)h$, then
\[
        (L_uh)''
        =
        \bigl(q-2(\log u)''+\lambda\bigr)L_uh.
\]
Applying this with $u=f_{r+1}^{[r]}$ and $h=f_j^{[r]}$ gives
\[
        q^{[r+1]}=q^{[r]}-2(\log f_{r+1}^{[r]})''.
\]
The endpoint and differentiated asymptotics give
\[
        W(f_{r+1}^{[r]},f_j^{[r]})(y)
        =
        C_{r,r+1}C_{r,j}(\alpha_j-\alpha_{r+1})
        e^{(\alpha_{r+1}+\alpha_j)y}(1+o(1)),
        \qquad y\to-\infty.
\]
Since $\alpha_j>\alpha_{r+1}\ge0$ and $j>r+1$, the exponent $\alpha_{r+1}+\alpha_j$ is strictly positive.  Thus the Wronskian tends to $0$ from the positive side as $y\to-\infty$, and $f_{r+1}^{[r]}(t)f_j^{[r]}(t)$ is integrable at $-\infty$ on every interval $(-\infty,y]$.  Hence
\[
        W(f_{r+1}^{[r]},f_j^{[r]})(y)
        =
        \int_{-\infty}^y
        (\lambda_j-\lambda_{r+1})f_{r+1}^{[r]}(t)f_j^{[r]}(t)\,dt
        >0.
\]
Therefore
\[
        f_j^{[r+1]}
        =\frac{W(f_{r+1}^{[r]},f_j^{[r]})}{f_{r+1}^{[r]}}>0.
\]
The next stage is globally defined.  This proves, in particular, positivity of every pivot.

It remains to recover the original Wronskians.  Lemma \ref{lem:darboux-crum} gives, for $2\le k\le m$,
\[
        W(f_1^{[0]},\ldots,f_k^{[0]})
        =f_1^{[0]} W(f_2^{[1]},\ldots,f_k^{[1]}),
\]
and the same identity holds at every later stage.  Iterating gives
\[
        W(f_1^{[0]},\ldots,f_k^{[0]})
        =\left(\prod_{r=0}^{k-2}f_{r+1}^{[r]}\right)f_k^{[k-1]}.
\]
All factors on the right are positive by the induction above, so the Wronskian is positive.
\end{proof}

\begin{proof}[Proof of Theorem \ref{thm:bessel-main}]
Put $x=e^y$ and
\[
        f_s(y)=I_s(e^y),\qquad y\in\R,
\]
for $s\ge0$.  The modified-Bessel differential equation
\[
        x^2I_s''(x)+xI_s'(x)-(x^2+s^2)I_s(x)=0
\]
becomes
\[
        f_s''(y)=(e^{2y}+s^2)f_s(y).
\]
Thus $s^2$ is the spectral parameter and $q^{[0]}(y)=e^{2y}$.  Since the order set is restricted to $s\ge0$, the ordering
$0\le s_1<\cdots<s_m$ is equivalent to
$s_1^2<\cdots<s_m^2$; the endpoint case $s_1=0$ causes no spectral collision.

The differential equation is the standard modified-Bessel equation \cite[Eq.~10.25.1]{DLMF}.  The positivity $f_s>0$ on $\R$ for $s\ge0$ and the endpoint expansion follow from the power series for $I_s$ and its limiting form at the origin \cite[Eqs.~10.25.2 and 10.30.1]{DLMF}.  Explicitly,
\[
        I_s(e^y)
        =
        \sum_{k=0}^{\infty}
        \frac{2^{-s-2k}}{k!\Gamma(s+k+1)}
        e^{(s+2k)y},
        \qquad s\ge0.
\]
This series and its termwise derivative converge uniformly on every left half-line $(-\infty,Y]$.  At the left endpoint,
\[
        I_s(x)=\frac{(x/2)^s}{\Gamma(s+1)}(1+O(x^2))
        \qquad(x\downarrow0),
\]
so, for $s>0$,
\[
        f_s(y)=2^{-s}\Gamma(s+1)^{-1}e^{sy}(1+O(e^{2y}))
        \qquad(y\to-\infty),
\]
while $f_0(y)=1+O(e^{2y})$.

Fix $0\le s_1<\cdots<s_m$ and put
\[
        c_s=2^{-s}\Gamma(s+1)^{-1}.
\]
The displayed power series gives the differentiated endpoint hypothesis in Proposition \ref{prop:darboux}.  For this finite family, define the Darboux transforms stage by stage.  The following induction verifies the endpoint asymptotics needed in Proposition \ref{prop:darboux} at every reached stage.  For $0\le r\le m-1$ and $j>r$,
\[
        f_{s_j}^{[r]}(y)
        =
        c_{s_j}\prod_{\ell=1}^r(s_j-s_\ell)e^{s_jy}(1+O(e^{2y}))
        \qquad(y\to-\infty),
\]
with the same estimate after one differentiation, so that
\[
        (f_{s_j}^{[r]})'(y)-s_j f_{s_j}^{[r]}(y)
        =
        O(e^{(s_j+2)y})
        \qquad(y\to-\infty),
\]
with the empty product interpreted as $1$.  For $r=0$ this is the standard Bessel expansion; in particular
\[
        f_0(y)=1+O(e^{2y}),
        \qquad
        \frac{d}{dy}\log f_0(y)=O(e^{2y}).
\]
Therefore, if the first pivot has exponent $0$, the first Darboux transform of any remaining $f_{s_j}$ has leading coefficient $s_jc_{s_j}>0$.  After this first stage, all remaining exponents are strictly positive.
Assume these estimates at stage $r$ and set $a=s_{r+1}$.  Since
\[
        \frac{d}{dy}\log f_a^{[r]}(y)=a+O(e^{2y}),
\]
also when $a=0$, we have, for $j>r+1$,
\[
\begin{aligned}
        f_{s_j}^{[r+1]}(y)
        &=
        \left(\frac{d}{dy}-\frac{(f_a^{[r]})'}{f_a^{[r]}}\right)
        f_{s_j}^{[r]}(y)\\
        &=
        c_{s_j}\prod_{\ell=1}^{r+1}(s_j-s_\ell)e^{s_jy}(1+O(e^{2y})).
\end{aligned}
\]
The same differentiated expansion gives the companion derivative estimate at stage $r+1$.
Every constant in the leading term is positive because $j>r+1$ implies $s_j>s_\ell$ for $\ell\le r+1$.  The standard Darboux computation also gives
\[
        (f_{s_j}^{[r]})''=(q^{[r]}+s_j^2)f_{s_j}^{[r]}
\]
at every stage.  Thus the endpoint and differential-equation hypotheses of Proposition \ref{prop:darboux} hold with $\alpha_j=s_j$ and $\lambda_j=s_j^2$.  Since $s\mapsto s^2$ is strictly increasing on $[0,\infty)$, the case $s_1=0$ creates no spectral collision.

It follows that all initial Wronskians
\[
        W(f_{s_1},\ldots,f_{s_k})(y)>0,
        \qquad 1\le k\le m,
\]
are positive.  Lemma \ref{lem:ect} then gives
\[
        \det[f_{s_j}(y_i)]_{i,j=1}^m>0
\]
for $y_1<\cdots<y_m$.  Since $x\mapsto\log x$ is increasing, this is exactly
\[
        \det[I_{s_j}(x_i)]_{i,j=1}^m>0
\]
for $0<x_1<\cdots<x_m$ and $0\le s_1<\cdots<s_m$.
\end{proof}

\begin{remark}
The proof uses only the differential equation, positivity, and ordered left-endpoint asymptotics with derivative control.  It therefore isolates the reason the integer-order Bessel result extends to real order: after the logarithmic change of variables, the Bessel order enters as the spectral parameter of a scalar Sturm equation.
\end{remark}

\section{Density of totally positive Pólya-frequency sequences}

We now prove Theorem \ref{thm:pf-density-intro}.  The mechanism is discrete smoothing by a strictly totally positive Toeplitz kernel.

\begin{lemma}[Discrete Gaussian Toeplitz kernels are strictly totally positive]
\label{lem:discrete-gaussian}
For $0<q<1$, let
\[
        g_q(n)=q^{n^2},\qquad n\in\Z.
\]
Then the Toeplitz kernel $T_{g_q}(i,j)=g_q(i-j)$ is $\STP_\infty$ on $\Z\times\Z$.
\end{lemma}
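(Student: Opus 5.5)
Factor the Gaussian so that Lemma \ref{lem:vandermonde} applies. For integers $i,j$ we have
\[
        q^{(i-j)^2}=q^{i^2}\,q^{j^2}\,q^{-2ij}.
\]
Put $\beta=-2\log q>0$; then $q^{-2ij}=e^{\beta ij}$. For integer rows $i_1<\cdots<i_m$ and columns $j_1<\cdots<j_m$, multilinearity of the determinant in rows and columns gives
\[
        \det[g_q(i_k-j_l)]_{k,l=1}^m
        =\Bigl(\prod_{k=1}^m q^{i_k^2}\Bigr)\Bigl(\prod_{l=1}^m q^{j_l^2}\Bigr)
        \det[e^{u_k v_l}]_{k,l=1}^m,
\]
where $u_k=\beta i_k$ and $v_l=j_l$. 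We only pull out positive scalar factors from rows and columns, so no sign changes occur.

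Since $\beta>0$, the sequence $u_1<\cdots<u_m$ is strictly increasing. The sequence $v_1<\cdots<v_m$ is strictly increasing by assumption. Lemma \ref{lem:vandermonde} therefore says the remaining determinant is strictly positive. The prefactors are products of positive numbers, so every such minor is positive. As $m$ and the index sets were arbitrary, $T_{g_q}$ is $\STP_\infty$ on $\Z\times\Z$.

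The only point to check is orientation, namely that $q<1$ makes the coefficient of $ij$ in the exponent positive, so that both argument sequences increase together. That is exactly the sign of $\beta$. The rest is routine, and Lemma \ref{lem:vandermonde} is applied on all of $\R$, which contains the integer points.
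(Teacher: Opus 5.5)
Your proof is correct and is essentially the paper's argument: the paper also factors $q^{(i-j)^2}=q^{i^2}q^{j^2}q^{-2ij}$ and invokes Lemma~\ref{lem:vandermonde}. The only difference is that the paper uses the lemma's power form $\det[(q^{-2i_r})^{j_s}]$ with strictly increasing bases $q^{-2i_r}$, whereas you use the equivalent exponential form with $u_k=\beta i_k$ and $v_l=j_l$.
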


\begin{proof}
This is the lattice Toeplitz form of the classical strict total positivity of the Gaussian kernel \cite{Karlin1968,Pinkus2010}; Gaussian and discrete Gaussian convolution also play a central role in the approximation arguments of Belton--Guillot--Khare--Putinar \cite[Section~6]{BeltonGuillotKharePutinar2023}.  We include the direct Vandermonde proof.
For increasing integers $i_1<\cdots<i_m$ and $j_1<\cdots<j_m$,
\[
        q^{(i_r-j_s)^2}=q^{i_r^2}q^{j_s^2}q^{-2i_rj_s}.
\]
Thus the minor is a positive row and column rescaling of
\[
        \det[(q^{-2i_r})^{j_s}]_{r,s=1}^m.
\]
Since $0<q<1$, the bases $q^{-2i_r}$ are strictly increasing in $r$.  Lemma \ref{lem:vandermonde} gives strict positivity.
\end{proof}

\begin{lemma}[Translate independence]
\label{lem:translate-independence}
Let $a=(a_n)_{n\in\Z}$ be a nonzero Pólya-frequency sequence which is not a bilateral geometric sequence $a_n=C\rho^n$ with $C>0$ and $\rho>0$.  Then every finite family of distinct translates
\[
        n\longmapsto a_{n-j_1},\ldots,n\longmapsto a_{n-j_m}
\]
is linearly independent.
\end{lemma}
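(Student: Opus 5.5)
The plan is to argue by contradiction: a nontrivial linear relation among translates forces a constant-coefficient linear recurrence, and log-concavity (the $2\times2$ minors) then forces the sequence to be geometric.

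\textbf{Step 1: the recurrence.} Suppose, after reindexing so that $j_1<\cdots<j_m$ and dropping vanishing coefficients, that
\[
\sum_{k=1}^m c_k a_{n-j_k}=0
\]
for all $n$, with $c_1c_m\neq0$ and $m\ge2$. (If $m=1$, then $a$ is zero, which is excluded.) Put $d=j_m-j_1\ge1$. Since both extreme coefficients are nonzero, $a$ satisfies an order-$d$ recurrence that can be solved both forwards and backwards. Consequently, if $d$ consecutive terms vanish, then $a\equiv0$.

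\textbf{Step 2: full positive support.} Next I would use the $\TN_2$ property of $T_a$: the $2\times2$ minors with rows $n,n+1$ and columns $0,1$ give $a_n^2\ge a_{n-1}a_{n+1}$. The standard argument via minors $\det\begin{pmatrix}a_i & a_{i-k}\\ a_j& a_{j-k}\end{pmatrix}\ge0$ shows that the support of a nonnegative $\TN_2$ Toeplitz sequence is an interval of integers. A finite or half-infinite interval would contain a block of $d$ consecutive zeros adjacent to a nonzero term, contradicting Step 1. Hence $a_n>0$ for every $n\in\Z$. The ratios $r_n=a_n/a_{n-1}$ are then positive and, by log-concavity, nonincreasing in $n$. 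So the limits
\[
r_+=\lim_{n\to+\infty}r_n,\qquad r_-=\lim_{n\to-\infty}r_n
\]
exist in $[0,\infty]$ and satisfy $r_-\ge r_+$.

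\textbf{Step 3: exponential-polynomial growth.} Solving the recurrence gives
\[
a_n=\sum_i p_i(n)z_i^n\qquad(n\in\Z),
\]
with distinct nonzero $z_i$ and nonzero polynomials $p_i$. Let $\rho_{\max}$ and $\rho_{\min}$ be the largest and smallest moduli among the $z_i$. I would invoke the standard fact that a nonzero exponential polynomial satisfies
\[
\limsup_{n\to+\infty}|a_n|^{1/n}=\rho_{\max},\qquad \limsup_{n\to+\infty}|a_{-n}|^{1/n}=1/\rho_{\min}.
\]
This follows from the nonvanishing of a trigonometric polynomial with distinct frequencies along a subsequence, or from the radius of convergence of the rational generating function. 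Because $a_n>0$ and $r_n$ converges monotonically, the root and ratio limits agree. Therefore $r_+=\rho_{\max}$ and $r_-=\rho_{\min}$, and in particular both limits are finite and positive.

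\textbf{Step 4: conclusion.} Combining $r_-\ge r_+$ with $\rho_{\min}\le\rho_{\max}$ gives $\rho_{\min}=\rho_{\max}=:\rho$. Since $r_n$ is monotone between equal limits, $r_n\equiv\rho$, so $a_n=a_0\rho^n$ with $a_0>0$. This is a bilateral geometric sequence, contradicting the hypothesis, and so the translates are linearly independent.

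The main obstacle is Step 3: matching the ratio limits to the extreme moduli when several $z_i$ share the extreme modulus with complex arguments, where cancellation could occur in principle. My first approach is the generating-function route. The series $\sum_{n\ge0}a_nz^n$ is rational with poles exactly at the $1/z_i$ having $n\ge0$ contributions, so its radius of convergence is $1/\rho_{\max}$ by Cauchy--Hadamard. Because $a_n>0$ and the ratio limit exists, that radius also equals $1/r_+$. The same argument applies to the negative tail.
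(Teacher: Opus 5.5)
Your proof is correct, but it takes a genuinely different route from the paper. The paper invokes the Aissen--Schoenberg--Whitney--Edrei representation to get a Laurent generating function $A(z)=\sum_n a_nz^n$ converging on a nonempty annulus. A dependence $\sum_s c_sa_{n-j_s}=0$ then says exactly that $P(z)A(z)\equiv0$ there, which the identity theorem forbids. You avoid the representation theorem altogether. You use only the $1\times1$ and $2\times2$ minors (nonnegativity, interval support, nonincreasing ratios). You combine these with the fact that a dependence is a two-sided constant-coefficient recurrence, whose exponential-polynomial solutions have root growth $\rho_{\max}$ at $+\infty$ and $1/\rho_{\min}$ at $-\infty$. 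The ordering $r_-\ge r_+$ of the ratio limits then forces $\rho_{\min}=\rho_{\max}$, hence a geometric sequence. Each step checks out, including the no-cancellation point in Step 3: distinct $z_i$ with nonzero $p_i$ give genuine, distinct poles $1/z_i$ of the rational generating function, so Cauchy--Hadamard applies. Your route gives a self-contained argument that proves the lemma for every nonzero non-geometric sequence whose Toeplitz kernel is merely $\TN_2$, not only $\TN_\infty$. The paper's route is brief given the classical theorem, and the same annulus is reused in the proof of Theorem~\ref{thm:pf-density-intro} for the exponential bound on $|a_n|$. Your Step 2 in fact already produces that annulus elementarily. Once $a_n>0$ for all $n$ with monotone ratios, $\sum_n a_nz^n$ converges on $1/r_-<|z|<1/r_+$, which is nonempty unless $r_-=r_+$, i.e.\ unless $a$ is geometric. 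So you could replace Step 3 by the paper's one-line identity-theorem argument if you prefer.
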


\begin{proof}
By the Aissen--Schoenberg--Whitney--Edrei representation theory, in the two-sided Laurent form recalled in \cite[Section~9]{BeltonGuillotKharePutinar2023} from the classical sources \cite{AissenSchoenbergWhitney1952,Edrei1952,Edrei1953}, the nonzero non-geometric case has a Laurent generating function
\[
        A(z)=\sum_{n\in\Z}a_nz^n
\]
which converges in a nonempty annulus.  Suppose there is a nontrivial finite dependence
\[
        \sum_{s=1}^m c_s a_{n-j_s}=0\qquad(n\in\Z).
\]
Multiplying the Laurent series by the nonzero Laurent polynomial $P(z)=\sum_s c_s z^{j_s}$ gives
\[
        P(z)A(z)=0
\]
throughout the annulus.  The product is an absolutely convergent Laurent series on the same annulus because $P$ has finite support.  Since $A$ is not identically zero and $P$ is not the zero Laurent polynomial, this is impossible.
\end{proof}

\begin{proof}[Proof of Theorem \ref{thm:pf-density-intro}]
First suppose $a$ is nonzero and not bilateral geometric.  For $0<q<1$, define
\[
        b^{(q)}_n=(g_q*a)_n=\sum_{k\in\Z}q^{k^2}a_{n-k}.
\]
The series converges absolutely because the Laurent-series annulus in the Aissen--Schoenberg--Whitney--Edrei representation gives constants $C,D>0$ with $|a_n|\le Ce^{D|n|}$.  Indeed, Cauchy estimates on two circles contained in the annulus give exponential bounds on the positive and negative Laurent coefficients.  Since $q^{k^2}$ decays superexponentially, convolution with $g_q$ is absolutely convergent.  Moreover, for any fixed $q_0\in(0,1)$ and $0<q\le q_0$,
\[
        q^{k^2}|a_{n-k}|
        \le C_n q_0^{k^2}e^{D|k|},
\]
and the right-hand side is summable over $k\in\Z$.  Dominated convergence therefore gives $b^{(q)}_n\to a_n$ for each fixed $n$ as $q\downarrow0$.

At the Toeplitz-kernel level,
\[
        T_{b^{(q)}}=T_{g_q}T_a.
\]
This identity is first interpreted entrywise, where absolute convergence follows from the superexponential decay of $g_q$.  For finite increasing row and column sets $I,J\subset\Z$ of the same size $m$, let $E_N=\{-N,\ldots,N\}$ and truncate the intermediate index set to $E_N$.  The finite Cauchy--Binet formula, in the standard total-positivity composition form \cite{Karlin1968,FallatJohnson2011}, gives
\[
        \det (T_{g_q}[I,E_N]T_a[E_N,J])
        =\sum_{\substack{K\subset E_N\\ |K|=m}}
          \det T_{g_q}[I,K]\,\det T_a[K,J],
\]
Each summand is nonnegative because $T_{g_q}$ is $\STP_\infty$ and $T_a$ is $\TN_\infty$.  As $N\to\infty$, the truncated products converge entrywise to $T_{b^{(q)}}[I,J]$, hence their determinants converge to $\det T_{b^{(q)}}[I,J]$.  Thus every minor of $T_{b^{(q)}}$ is nonnegative.

The same truncation proves strictness.  The columns of $T_a$ indexed by $J$ are linearly independent by Lemma \ref{lem:translate-independence}.  Hence there is a finite set of row indices for which the corresponding $m\times m$ coordinate matrix is nonsingular.  Reordering these row indices increasingly changes the determinant only by a sign, so we obtain an increasing row set $K_0=\{k_1<\cdots<k_m\}$ such that
\[
        \det T_a[K_0,J]\ne0.
\]
Since $T_a$ is totally nonnegative and both $K_0$ and $J$ are increasing, that determinant is positive.  Choose $N_0$ so that $K_0\subset E_N$ for every $N\ge N_0$.  For all such $N$, the Cauchy--Binet sum contains the fixed strictly positive summand
\[
\begin{aligned}
        c
        &:=
        \det T_{g_q}[I,K_0]\,\det T_a[K_0,J] \\
        &>0.
\end{aligned}
\]
Passing to the limit gives
\[
        \det T_{b^{(q)}}[I,J]
        \ge c>0.
\]
Thus $b^{(q)}$ is a totally positive P\'olya-frequency sequence.

Now suppose $a_n=C\rho^n$ with $C>0$ and $\rho>0$.  For $\varepsilon>0$, set
\[
        b^{(\varepsilon)}_n=C\rho^n e^{-\varepsilon n^2}.
\]
Then $b^{(\varepsilon)}_n\to a_n$ for each $n$ as $\varepsilon\downarrow0$.  Moreover
\[
        b^{(\varepsilon)}_{i-j}
        = C\rho^i\rho^{-j}e^{-\varepsilon i^2}e^{-\varepsilon j^2}e^{2\varepsilon ij},
\]
so every Toeplitz minor is a positive row and column rescaling of a generalized Vandermonde determinant in the bases $e^{2\varepsilon i}$.  Hence $b^{(\varepsilon)}$ is totally positive by Lemma \ref{lem:vandermonde}.

Finally, the zero sequence is the pointwise limit of $\delta b^{(\varepsilon)}$ as $\delta\downarrow0$ for any fixed $\varepsilon>0$.  This completes the proof.
\end{proof}

\begin{remark}
The conclusion is a product-topology density statement.  Norm and tail topologies require additional hypotheses.
\end{remark}

\section{Consequences and open directions}

The results above share a useful form: strict total positivity is obtained by exposing a mechanism that forces all finite minors to be positive.  For the Bessel kernel the mechanism is spectral, through ordered endpoint asymptotics and Darboux transport.  For Pólya-frequency sequences the mechanism is smoothing by a strictly totally positive Toeplitz kernel.

For the Bessel kernel, the determinant inequalities are now available for arbitrary nonnegative real orders:
\[
\begin{gathered}
        \det[I_{s_j}(x_i)]_{i,j=1}^m>0,\\
        0<x_1<\cdots<x_m,\qquad 0\le s_1<\cdots<s_m.
\end{gathered}
\]
Consequently, finite systems $\{x\mapsto I_{s_j}(x)\}_{j=1}^m$ are oriented extended Chebyshev systems on $(0,\infty)$ in the standard Wronskian sense \cite{Karlin1968,Pinkus2010,Coppel1971}.  Indeed, if $f_j(y)=I_{s_j}(e^y)$ and $x=e^y$, then $d/dy=x\,d/dx$, and the triangular change of derivative bases gives
\[
        W_y(f_1,\ldots,f_k)(\log x)
        =
        x^{k(k-1)/2}
        W_x(I_{s_1},\ldots,I_{s_k})(x).
\]
Thus the positive $y$-Wronskians obtained in the proof are equivalent to positive ordinary $x$-Wronskians.  This is the Chebyshev and interpolation-theoretic content of Theorem \ref{thm:bessel-main}; for integer orders, Buchstaber--Glutsyuk already proved strict total positivity.

For Pólya-frequency sequences, Theorem \ref{thm:pf-density-intro} supplies a robust approximation device: convolution with the discrete Gaussian $q^{n^2}$ converts non-geometric Pólya-frequency sequences into strictly totally positive ones without changing pointwise limits.  The exceptional geometric rank-one cases are handled by a multiplicative Gaussian tilt.

The product-topology statement gives a precise coordinatewise formulation of the density question of Belton--Guillot--Khare--Putinar.  The proof also records the local analytic form of the smoothing: if a non-geometric Pólya-frequency sequence has Laurent generating function $A(z)$ in an annulus, then the approximants satisfy
\[
        B_q(z)=G_q(z)A(z),
        \qquad
        G_q(z)=\sum_{k\in\Z}q^{k^2}z^k,
\]
and $G_q\to1$ locally uniformly on compact subannuli as $q\downarrow0$.  Any norm-level strengthening should be stated only after specifying a sequence space in which convolution by $g_q-\delta_0$ tends to zero.  Such refinements require additional tail hypotheses and lie beyond the density theorem proved here.

The remaining open direction is to identify spectral families for which the endpoint asymptotics used in Proposition \ref{prop:darboux} propagate through all Darboux stages.

\begin{question}[Propagation of spectral Darboux asymptotics]
Let $\Lambda\subset\R$ be an interval and let $f_\lambda>0$ be a solution family of
\[
        f_\lambda''=(q+\lambda)f_\lambda
\]
on an interval, with endpoint behavior
\[
        f_\lambda(y)\sim C(\lambda)e^{\alpha(\lambda)y},
        \qquad C(\lambda)>0,
\]
at one endpoint, where $\alpha$ is strictly increasing on $\Lambda$.  Give intrinsic conditions on $q$ and on the family $\lambda\mapsto f_\lambda$ ensuring that these ordered endpoint asymptotics, with the derivative control required in Proposition \ref{prop:darboux}, persist under every finite Darboux chain generated by $\lambda_1<\cdots<\lambda_m$.  Under such conditions, Proposition \ref{prop:darboux} proves $\STP_\infty$ for the parameter-evaluation kernel $(y,\lambda)\mapsto f_\lambda(y)$.
\end{question}

\section*{Declaration of competing interest}

The author declares that there are no known competing financial interests or personal relationships that could have appeared to influence the work reported in this paper.

\section*{Funding}

This research did not receive any specific grant from funding agencies in the public, commercial, or not-for-profit sectors.

\section*{Data availability}

No data were used for the research described in this article.

\section*{Declaration of generative AI and AI-assisted technologies in the manuscript preparation process}

During the preparation of this work, the author used the Pudim AI research workflow, including ChatGPT and Codex, to support literature triage, manuscript organization, language revision, and consistency checks of references and proofs.  Public provenance for the original Pudim AI / zeta-law demo workflow is available at \url{https://github.com/pudim-project/pudim-ai-demo-zetalaw}.  The application targets used for this manuscript were APP-0072 and APP-0073.  After using these tools, the author reviewed and edited the content as needed and takes full responsibility for the content of the published article.


\begin{thebibliography}{99}

\bibitem{AissenSchoenbergWhitney1952}
M. Aissen, I. J. Schoenberg, and A. M. Whitney,
\newblock On the generating functions of totally positive sequences. I,
\newblock \emph{J. Analyse Math.} \textbf{2} (1952), 93--103.

\bibitem{BeltonGuillotKharePutinar2023}
A. Belton, D. Guillot, A. Khare, and M. Putinar,
\newblock Totally positive kernels, Pólya frequency functions, and their transforms,
\newblock \emph{J. d'Analyse Math.} \textbf{150} (2023), no. 1, 83--158.
\newblock DOI: \href{https://doi.org/10.1007/s11854-022-0259-7}{10.1007/s11854-022-0259-7}.

\bibitem{BuchstaberGlutsyuk2019}
V. M. Buchstaber and A. A. Glutsyuk,
\newblock Total positivity, Grassmannian and modified Bessel functions,
\newblock \emph{Contemporary Mathematics} \textbf{733} (2019), 97--107.
\newblock DOI: \href{https://doi.org/10.1090/conm/733/14736}{10.1090/conm/733/14736}; arXiv:1708.02154.

\bibitem{Coppel1971}
W. A. Coppel,
\newblock \emph{Disconjugacy},
\newblock Lecture Notes in Mathematics, Vol. 220, Springer, Berlin, 1971.

\bibitem{Darboux1882}
G. Darboux,
\newblock Sur une proposition relative aux équations linéaires,
\newblock \emph{Comptes Rendus de l'Académie des Sciences} \textbf{94} (1882), 1456--1459.

\bibitem{Crum1955}
M. M. Crum,
\newblock Associated Sturm--Liouville systems,
\newblock \emph{Quart. J. Math. Oxford Ser. (2)} \textbf{6} (1955), 121--127.

\bibitem{DLMF}
NIST Digital Library of Mathematical Functions,
\newblock F. W. J. Olver, A. B. Olde Daalhuis, D. W. Lozier, B. I. Schneider, R. F. Boisvert, C. W. Clark, B. R. Miller, B. V. Saunders, H. S. Cohl, and M. A. McClain, eds.,
\newblock Release 1.2.7 of 2026-06-15.
\newblock \url{https://dlmf.nist.gov/}; accessed 2026-07-02.

\bibitem{Edrei1952}
A. Edrei,
\newblock On the generating functions of totally positive sequences. II,
\newblock \emph{J. Analyse Math.} \textbf{2} (1952), 104--109.

\bibitem{Edrei1953}
A. Edrei,
\newblock On the generating function of a doubly infinite, totally positive sequence,
\newblock \emph{Trans. Amer. Math. Soc.} \textbf{74} (1953), no. 3, 367--383.
\newblock DOI: \href{https://doi.org/10.2307/1990808}{10.2307/1990808}.

\bibitem{FallatJohnson2011}
S. M. Fallat and C. R. Johnson,
\newblock \emph{Totally Nonnegative Matrices},
\newblock Princeton University Press, Princeton, 2011.

\bibitem{Karlin1968}
S. Karlin,
\newblock \emph{Total Positivity, Vol. I},
\newblock Stanford University Press, Stanford, 1968.

\bibitem{Pinkus2010}
A. Pinkus,
\newblock \emph{Totally Positive Matrices},
\newblock Cambridge Tracts in Mathematics, Cambridge University Press, Cambridge, 2010.

\end{thebibliography}
\end{document}